\newtheorem{theorem}{Theorem}[section]
\newtheorem{lemma}[theorem]{Lemma}
\newtheorem{remark}[theorem]{Remark}
\newtheorem*{conjecture}{Conjecture}
\newtheorem*{thm}{Theorem}
\def\CC{\mathbb{C}}
\def\II{\mathcal{I}}
\title{Quantum version of Wielandt's Inequality revisited}
\author{Mateusz Micha\l{}ek\\
\quad\\
%\newline
%\mbox{}\,\,\,\,$\!\!\!\!$\quad
Max Planck Institute\\ Mathematics in the Sciences\\ Leipzig, Germany\\
\texttt{michalek@mis.mpg.de}\\
and\\
Polish Academy of Sciences\\ 
Institute of Mathematics\\ 
Warsaw, Poland
%\email{wajcha2@poczta.onet.pl}
\and
Yaroslav Shitov\\
\quad\\
%\newline
%\mbox{}\,\,\,\,$\!\!\!\!$\quad
Izumrudnaya ulitsa 65,\\
kvartira 4\\
Moscow 129346, Russia\\
\texttt{yaroslav-shitov@yandex.ru}
}
\DeclareMathOperator\Endo{End}
\begin{document}
\maketitle

\begin{abstract}
Consider a linear space $L$ of complex $D$-dimensional linear operators, and assume that some power $L^k$ of $L$ is the whole set $\Endo(\mathbb{C}^D)$. Perez-Garcia, Verstraete, Wolf and Cirac conjectured that the sequence $L^1,L^2,\ldots$ stablilizes after $O(D^2)$ terms; we prove that this happens after $O(D^2\log D)$ terms, improving the previously known bound of $O(D^4)$.
%We improve best known bounds for the number of times a set of matrix operators must be applied in order to obtain any other operator, from known $O(D^4)$ to $O\left(D^2\log_2(D)\right)$. Our result almost matches the best possible bound conjectured by Perez-Garcia, Verstraete, Wolf and Cirac which is $O(D^2)$.
\end{abstract}

\section{Introduction}
The main motivation of this article is a conjecture of Perez-Garcia, Verstraete, Wolf and Cirac \cite{PerezGarciaVerstraete}, which can be stated as follows:
\begin{conjecture}
Let $L$ be a linear space of $D\times D$ matrices. If $\dim L^k=D^2$ for some $k$, then it also holds for all $k\geq cD^2$ for some constant $c$ not depending on $D$.
\end{conjecture}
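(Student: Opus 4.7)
The plan is to work in two phases. Let $N^{*}$ denote the least $k$ with $L^{k}=\Mat_D(\mathbb{C})$; the goal is $N^{*}\leq cD^{2}$. A preliminary observation is that once $L^{N^{*}}=\Mat_D$, some element of $L$ is invertible (any full-rank product $A_{1}\cdots A_{N^{*}}$ forces $A_{1}$ surjective), so $L\cdot\Mat_D=\Mat_D$ and the equality $L^{m}=\Mat_D$ propagates to all $m\geq N^{*}$. It therefore suffices to bound $N^{*}$.

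In the first phase, I would analyze the cumulative filtration $W_{k}:=L+L^{2}+\cdots+L^{k}$. This chain is strictly increasing in $k$ until it stabilizes at the non-unital algebra $\mathcal{A}$ generated by $L$. By hypothesis $\mathcal{A}\supseteq L^{N^{*}}=\Mat_D$, so $\mathcal{A}=\Mat_D$ and stabilization occurs by some $k_{0}\leq D^{2}$. This phase also yields a decomposition $I=g_{1}+\cdots+g_{k_{0}}$ with $g_{j}\in L^{j}$, and, by Burnside, the fact that $L$ acts irreducibly on $\mathbb{C}^{D}$.

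In the second phase, the goal is to convert $W_{k_{0}}=\Mat_D$ into $L^{m}=\Mat_D$ for a single $m$ of order $D^{2}$. The plan is to expand $I^{r}=\bigl(\sum_{j}g_{j}\bigr)^{r}$ as a sum over length-$r$ index sequences, with each summand lying in some $L^{s}$ for $s\in\{r,\ldots,rk_{0}\}$; then, by tuning coefficients via a Vandermonde/polynomial-interpolation argument in an auxiliary spectral parameter, isolate a single homogeneous component to conclude $I\in L^{s}$ for one specific $s$ of order at most $D^{2}$. Once such an $s$ is produced, the shifted subsequence $L^{m},L^{m+s},L^{m+2s},\ldots$ has non-decreasing dimension (multiplication by $I\in L^{s}$ embeds $L^{m}$ into $L^{m+s}$), so it stabilizes after at most $D^{2}$ strict jumps and must reach $\Mat_D$; careful accounting of exponents then gives $N^{*}=O(D^{2})$.

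The principal obstacle is the isolation step in Phase 2. A naive padding argument gives only $O(D^{4})$, and the cleaner doubling/interpolation scheme appears to introduce an extra $\log D$ factor because partial cancellations must be combined iteratively; this is what separates the bound proved in this paper from the conjecture. To close the remaining $\log D$ gap one would need either a one-shot interpolation producing a single clean exponent, or a different route avoiding the reduction to a single exponent altogether. The most promising alternative is to analyze $\Mat_D$ as an irreducible bimodule under the two-sided action of $\mathcal{A}=\Mat_D$: since this bimodule has dimension $D^{2}$, a representation-theoretic argument exploiting the interplay between $L^{m}$ and both-sided $\mathcal{A}$-multiplication might force saturation at $m=O(D^{2})$ directly, without passing through any Vandermonde construction.
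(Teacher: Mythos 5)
Two concrete problems appear in your outline before the gap you yourself flag. First, the preliminary observation is false: $L^{N^{*}}=\Mat_D(\CC)$ means only that the \emph{span} of length-$N^{*}$ products is everything, so no single product need be invertible, and in particular no $A_1\in L$ need be surjective. In fact the hard case (see the paper's closing remarks) is precisely when every element of $L$ is nilpotent, hence singular. The propagation $L^{m}=\Mat_D$ for $m\geq N^{*}$ is still true, but the correct reason is that the column spaces of elements of $L$ must jointly span $\CC^{D}$ (otherwise every product would have image in a fixed proper subspace), which gives $L\cdot\Mat_D=\Mat_D$ by decomposing a rank-one matrix $u e_i^{\top}$ as $\sum_j (A_j u_j)e_i^{\top}$. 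Second, even granting the isolation step $I\in L^{s}$ for some single $s=O(D^{2})$, your exponent accounting does not close: the chain $L^{m}\subset L^{m+s}\subset L^{m+2s}\subset\cdots$ stabilizes after at most $D^{2}$ strict increases, so you reach $\Mat_D$ only at exponent $m+D^{2}s=O(D^{4})$. You would need $s=O(1)$, not $s=O(D^{2})$, and nothing in the outline produces that.

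For context: the statement you are trying to prove is still open; the paper proves only $\II\leq 2D^{2}(6+\log_2 D)=O(D^{2}\log D)$. Its route is quite different from yours. Rather than forcing $I$ into a single $L^{s}$, it iteratively halves the rank of square-zero matrices (Lemma~\ref{lem5}) to produce a \emph{non-nilpotent} $B\in L^{\Lambda}$ of small rank $R$ with $\Lambda\leq\frac{D}{R}\bigl(3+\log_2\frac{D}{R}\bigr)$; the halving process is exactly where the $\log D$ enters. A non-nilpotent $B$ with fixed vector $v$ then serves as a pivot: one shows $L^{D}v=\CC^{D}$ via the filtration $(L^1+\cdots+L^{j})v$, and separately that one can reach a rank-one map onto $\CC v$ within $L^{RD}$ using the projection onto the semisimple part of $B$; composing gives $\II\leq\Lambda(R+1)D$. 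Your Phase~1 has some kinship with the Paz-type cumulative filtration mentioned in the introduction, but that only bounds the $j$ with $\dim(L^{1}+\cdots+L^{j})=D^{2}$, and converting that into information about a single $L^{j}$ is exactly the missing step. Your bimodule suggestion at the end is a reasonable thing to try, but it would need a genuinely new mechanism; the currently known techniques (including the paper's) do not remove the $\log D$.
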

Here by $L^k$ we mean the linear space spanned by all products of $k$ matrices in $L$ - for precise definition see Section \ref{sec:notation}. The state of the art is the main result of \cite{QuantumWielandt}, which instead of the bound $O(D^2)$ provides $O(D^4)$. 

There are several motivations to study this conjecture. The original one is the geometry of uniform Matrix Product States. Indeed, as shown in \cite{PerezGarciaVerstraete} the conjecture has direct consequences on the representations of the $W$-state as matrix product states. Let us cite \cite[Section A]{PerezGarciaVerstraete}:
"\emph{The conjectures, if true, can be used to prove a couple of interesting results, one concerning the
MPS representation of the W-state, and the other concerning
the approximation by MPS of ground
states of gapped hamiltonians.}"

In particular, the conjecture implies that families of tensors described as uniform Matrix Product States may be not closed. 
Further, as explained in \cite{QuantumWielandt} it can be regarded as a quantum analogue of Wielandt's inequality \cite{Wielandt1950}. The result has many other applications including "\emph{dichotomy theorems for the zero--error capacity of quantum channels and for the Matrix Product State (MPS) dimension of ground states of frustration-free Hamiltonians}" and "\emph{new bounds on the required interaction-range of Hamiltonians with unique MPS ground state}" \cite{QuantumWielandt}.

Our main new input is to relate this conjecture to another classical open problem in pure algebra. The question is to bound the $k$ under the assumption that $\dim L^1+L^2+\dots+L^k=D^2$. This is an older open problem posed by Paz \cite{Paz}, who conjectured that the correct optimal bound for $k$ in this setting is $2D-2$. He was able to prove an upper bound of $D^2/3+2/3$, which was later improved to $O\left(D^{1.5}\right)$ by Pappacena~\cite{Pappacena}. The latest best known bound is $O(D \log D)$ by the second author~\cite{Shitov}. The $2D-2$ conjecture is known to hold if $L$ contains a non-derogatory matrix~\cite{GMLS} and for $D\leqslant 5$ (see~\cite{Shitov}). %with partial results due to \cite{,} and recent improvements \cite{}.
This approach leads to our main theorem:

\begin{thm}
Let $L$ be a linear space of $D\times D$ matrices. If $\dim L^k=D^2$ for some $k$, then it also holds for all $k\geq 2D^2\left(6+\log_2(D)\right)$.
\end{thm}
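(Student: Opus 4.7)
The plan is to connect the theorem to Paz's problem via Shitov's bound~\cite{Shitov} and then bootstrap from a sum of powers to a single power. Since $\dim L^k=D^2$ for some $k$, the non-unital algebra generated by $L$ coincides with $\Endo(\CC^D)$, so Shitov's theorem yields an integer $m=O(D\log D)$ with
\[
V \;:=\; L + L^2 + \cdots + L^m \;=\; \Endo(\CC^D).
\]
In particular, fix matrices $A_i\in L^i$ with $I=A_1+\cdots+A_m$.

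Two easy consequences of this decomposition will be used. First, a \emph{sliding-window monotonicity}: for any $n\geq 1$ and any $X\in L^n$, $X = X\cdot I = \sum_i XA_i \in L^{n+1}+\cdots+L^{n+m}$, so the windows $W_n := L^n+L^{n+1}+\cdots+L^{n+m-1}$ satisfy $W_n\subseteq W_{n+1}$; since $W_1=V=\Endo(\CC^D)$, we obtain $W_n=\Endo(\CC^D)$ for every $n\geq 1$. Second, a \emph{persistence} fact: if some single $L^{k^*}$ equals $\Endo(\CC^D)$, then so does every $L^n$ with $n\geq k^*$, because the common left and right kernels of $L$ must be trivial (otherwise every element of $L^{k^*}$ would annihilate a nonzero vector, contradicting $I\in L^{k^*}$). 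It therefore suffices to exhibit a single index $k^*\leq 2D^2(6+\log_2 D)$ with $L^{k^*}=\Endo(\CC^D)$.

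To exhibit such a $k^*$ I propose a dimension-growth argument whose shape matches the factorisation $2D^2(6+\log_2 D)=D\cdot 2D(6+\log_2 D)$: perform $O(D)$ iterations, each costing $O(D\log D)$ powers of $L$, and each extending by one dimension a suitable $L$-invariant object — for instance, an iterated image $L^{n_j}v_0\subseteq\CC^D$ for a fixed $v_0\neq 0$. From $W_n=\Endo(\CC^D)$ we get $\sum_{j=0}^{m-1}\dim L^{n+j}\geq D^2$, so "bulky" powers of dimension $\geq D^2/m$ occur in every window; multiplying a bulky $L^{n_0}$ on either side by a later window $W_\bullet=\Endo(\CC^D)$ and then applying Shitov's bound inside an appropriate quotient should force one extra dimension to be gained. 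After $D$ iterations the image is all of $\CC^D$, and an analogous right-hand argument yields $L^{k^*}=\Endo(\CC^D)$.

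The main obstacle is the quantitative form of the growth step. For general subspaces $A,B\subseteq\Endo(\CC^D)$, the dimension of $A\cdot B$ can collapse to $\max(\dim A,\dim B)$, so naive multiplicativity is unavailable. Two structural inputs should save the day: (i) every sliding window $W_n$ is the \emph{entire} algebra $\Endo(\CC^D)$, a much stronger statement than mere containment of a basis, and (ii) the $L$-action on $\CC^D$ is irreducible — no proper $L$-invariant subspace of $\CC^D$ exists, hence none can trap the iteration. Together these should rule out the pathological collapse of $A\cdot B$ at each step and guarantee a genuine increment per iteration, producing the desired bound.
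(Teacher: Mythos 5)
Your opening moves are sound. Invoking Shitov's $O(D\log D)$ bound for Paz's problem to get $L+L^2+\cdots+L^m=\Endo(\CC^D)$, noting the sliding-window monotonicity $W_n\subseteq W_{n+1}$ (which gives $W_n=\Endo(\CC^D)$ for every $n$), and the persistence fact ($L^{k^*}=\Endo(\CC^D)\Rightarrow L^n=\Endo(\CC^D)$ for $n\geq k^*$, because $L$ has trivial common kernel and co-kernel) are all correct. The spirit of the argument --- tying the theorem to Paz's problem --- is also the paper's main new insight.

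But the heart of the proof is missing, and you know it: the ``dimension-growth'' step is entirely conjectural. Statements such as ``should force one extra dimension to be gained'' and ``should rule out the pathological collapse'' are hopes, not arguments. There is no mechanism proposed that converts the spanning of a \emph{window} of consecutive powers into the spanning of a \emph{single} power. The issue is real: the sequence $\dim L^j$ can oscillate (the paper notes in a remark that it need not be weakly monotonic), and neither irreducibility of the $L$-action nor the equality $W_n=\Endo(\CC^D)$ by itself gives you a guaranteed increment in $\dim L^{n_j}v_0$ at a controlled cost. Without a concrete pumping device, the iteration cannot be closed.

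What makes the paper's argument work is precisely a device you don't have: a non-nilpotent matrix of small rank in a low power of $L$. Rather than invoking Shitov's Paz bound as a black box, the paper adapts a single claim from that work (Lemma~\ref{lem4}, Claim 13 of~\cite{Shitov}) and iterates on square-zero matrices (Lemma~\ref{lem5}) to produce a non-nilpotent $B\in L^\Lambda$ of rank $R$ with $\Lambda\leqslant\frac{D}{R}(3+\log_2\frac{D}{R})$ (Lemma~\ref{lem:nonnil}). After rescaling $B$ to have eigenvalue $1$ at a fixed vector $v$, one can pad arbitrary words to a common length: if $w=\sum_{i\leq D}W_iv$ with $W_i\in L^i$, then $w=\sum_i W_iB^{D-i}v$ with $W_iB^{D-i}\in L^D$, giving $L^Dv=\CC^D$ in a \emph{single} power. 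An analogous argument on the projection onto the non-nilpotent Jordan block yields a rank-one ``reset'' in $L^{RD}$, and composing gives $\II\leq\Lambda(R+1)D\leq 2D^2(6+\log_2 D)$. The non-nilpotent matrix is exactly the tool that replaces your missing growth mechanism; your plan has no substitute for it, and I don't see how the sliding-window monotonicity alone can supply one.
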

In particular, we confirm that the exponent conjectured in \cite{PerezGarciaVerstraete} is indeed equal to two.

After finishing the articles the authors learned that a related topic has been recently studied by Rahaman in \cite{Rahaman}. Under additional positivity assumptions the author proves an $O(D^2)$ bound for the index of primitivity. This is a related quantity, however it is not associated to a linear subspace of matrices, but rather a (primitive, positive) operator on the space of matrices. Our results remain independent, apart from the fact that the bound we provide is also a bound for the index of primitivity if the operator admits a Kraus decomposition.  
%Your introduction goes here! Some examples of commonly used commands and features are listed below, to help you get started. If you have a question, please use the help menu (``?'') on the top bar to search for help or ask us a question. 
\section*{Acknowledgements}
We would like to thank Khazhgali Kozhasov, Joseph Landsberg, Tim Seynnaeve and Emanuele Ventura for discussions on the topic. MM was supported by Polish National Science Center project 2013/08/A/ST1/00804 affiliated at the University of Warsaw.
\section{Notation}\label{sec:notation}
We fix a complex $D$ dimensional vector space $V\simeq \CC^D$. Let $L=L^1\subset \Endo{V}$ be a subspace of linear endomorphisms of $V$. We fix a basis $A_{1},\dots,A_{\dim L^1}$ of $L$ and regard each $A_i$ as a $D\times D$ matrix. Let $L^j$ be the linear subspace of $\Endo(V)$ spanned by products of (not necessarily distinct) $j$ elements of $L$. In particular, a generator of $L^j$ can be regarded as a \emph{word} $A_{i_1}\cdots A_{i_j}$ of length $j$. More generally for any linear space $S\subset \Endo(V)$ we define:
\begin{enumerate}
\item $S^j\subset \Endo(V)$ as the space generated by products of $j$ elements of $S$,
\item $S^{\leq t}:=\sum_{j=1}^t S^j.$
\end{enumerate}

We say that a matrix $M$ is \emph{zero-square} if $M^2=0$.
%$V\simeq \CC^D$ - vector space
%$L^1:=\{A_{1},\dots,A_{\dim L^1}\}$.
%zero-square, 
%words

%\section{July 24 Yaroslav's update --- sorry if the notation is inconsistent, but I tried to use it correctly}
\section{Quantum version of Wielandt's Inequality}
Throughout this section we work under the assumption that $\dim L^j=D^2$ for $j$ large enough. We start with a general lemma taken from~\cite{Shitov}.

\begin{lemma}[Claim~13 in~\cite{Shitov}]\label{lem4}%{\upshape (Claim~13 in~\cite{Shitov}.)}
Let $S\subset\CC^{n\times n}$, $P\in\CC^{p\times n}$, $Q\in\CC^{n\times q}$. Let $k$ be the smallest integer such that $PS^kQ\neq0$. Then, for any $A_1,\ldots,A_k\in S$, we have $\operatorname{rank}(PA_1\ldots A_kQ)\leqslant n/k$.\end{lemma}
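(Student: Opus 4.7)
The plan is to prove the bound by exhibiting $kr$ linearly independent vectors in $\CC^n$, where $r := \operatorname{rank}(PA_1\cdots A_k Q)$. Since any independent set in $\CC^n$ has cardinality at most $n$, this immediately yields $kr \le n$, i.e., $r \le n/k$. Concretely, I would pick vectors $q_1, \ldots, q_r \in \CC^q$ whose images $f_j := PA_1\cdots A_k Q q_j$ form a basis of the column space of $PA_1\cdots A_k Q$ in $\CC^p$, and then for $0 \le i \le k-1$ and $1 \le j \le r$ I would define
$$v_j^{(i)} := A_{i+1}A_{i+2}\cdots A_k Q q_j \in \CC^n.$$
These are the $kr$ vectors that I claim are linearly independent.

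The key identities driving the argument are the following. For $i \ge m$, the product $PA_1\cdots A_m v_j^{(i)} = PA_1\cdots A_m A_{i+1}\cdots A_k Q q_j$ lies in $PS^{m+k-i}Q$; when $i > m$ we have $m+k-i < k$, so this vanishes by the minimality of $k$, while when $i = m$ the product telescopes to $PA_1\cdots A_k Q q_j = f_j$.

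To prove independence, I would suppose $\sum_{i,j} c_{i,j} v_j^{(i)} = 0$ and induct on $m = 0, 1, \ldots, k-1$ to establish $c_{m,j} = 0$ for all $j$. The inductive hypothesis eliminates every summand with $i < m$, so applying $PA_1\cdots A_m$ to the relation leaves only the $i = m$ contributions, giving $\sum_j c_{m,j} f_j = 0$; linear independence of the $f_j$'s in $\CC^p$ then forces each $c_{m,j} = 0$, completing the induction and thus the proof.

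The main obstacle is the asymmetry in the vanishing identities above: for $i < m$ the associated word has length $m + k - i > k$, so the minimality of $k$ provides no information and $PA_1\cdots A_m v_j^{(i)}$ need not be zero. The induction on $m$ is precisely the device that circumvents this — low-index terms are killed off (by the inductive hypothesis) before we apply the longer operator $PA_1\cdots A_m$, so the problematic products never actually appear in the computation.
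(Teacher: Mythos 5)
Your argument is correct and self-contained. The paper does not reprove this lemma but simply cites it as Claim~13 in~\cite{Shitov}; your ``staircase'' construction of the $kr$ vectors $v_j^{(i)} = A_{i+1}\cdots A_k Q q_j$ and the induction on the prefix length $m$ (using minimality of $k$ to kill the terms with $i>m$ and linear independence of the $f_j$ to extract $c_{m,j}=0$) is exactly the standard argument behind that claim, and every step checks out.
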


%\begin{proof}
%Let $V_0=\operatorname{Im}Q$ and $V_t=\sum_{M\in S^{\leqslant t}}\operatorname{Im}MQ$. Let $\B_0,\ldots,\B_k\subset\CC^n$ be vector families such that $\B_0\cup\ldots\cup\B_t$ is a basis of $V_t$ for $t=0,\ldots,k$. Let $\mathcal{C}\subset\CC^n$ be such that $\B_0\cup\ldots\cup\B_k\cup\mathcal{C}$ is a basis of $\CC^n$. Every matrix $A\in S$ has the form
%$$\left(\begin{array}{c|c|c|c|c|c|c}
% & \B_0 &\B_1 &\ldots &\B_{k-1} &\B_k&\mathcal{C} \\\hline
%\B_0 & *  &\ldots &\ldots &\ldots &*&* \\\hline
%\B_1 & A(1,0)& *  &\ldots &\ldots  &*&* \\\hline
%\B_2 & O & A(2,1)& *   &\ldots  &*&* \\\hline
%\vdots&\vdots&O&\ddots&*&\vdots&\vdots\\\hline
%\B_k&\vdots&\vdots&\ddots&A(k,k-1)&*&*\\\hline
%\mathcal{C}&O&O&\ldots&O&*&*
%\end{array}\right),$$
%where the $*$'s stand for entries that we need not specify, and the left column and top row of the matrix above indicate the basis vectors the respective blocks of rows and columns correspond to. We also have $P=\left(O|\ldots|O|P'|*\right)$, $Q=(Q'|O|\ldots|O)^\top$ with some matrices $P',Q'$ at the $\B_k$ position of $P$ and the $\B_0$ position of $Q$, respectively. For $A_1,\ldots,A_k\in S$, the matrix $PA_k\ldots A_1Q$ equals $P'A_k(k,k-1)\ldots A_1(1,0)Q'$, so its rank is at most $\min_t|\B_t|\leqslant n/k$.
%The only non-zero block of the matrix $PA_k\ldots A_1Q$ has the coordinates $(\B_k,\B_0)$ and equals $P'A_k(k,k-1)\ldots A_1(1,0)Q'$; its rank is at most $\min_t|\B_t|\leqslant n/k$.
%\end{proof}

\begin{lemma}\label{lem5}
Assume that $L^\lambda$ contains a square-zero matrix $H$ of rank $\rho>0$ with $\lambda\rho\leq D(1+\log_2\frac{D}{\rho})$. Then either 

\noindent (1) a square-zero matrix of rank $\rho_1\in\left[1,0.5{\rho}\right]$ is contained in $L^{\lambda_1}$ with %$$\lambda_1\leqslant \frac{\lambda\rho}{\rho_1}+\frac{2n(\rho-\rho_1)}{\rho\rho_1},\mbox{$ $ $ $} or$$
%$\lambda_1\leqslant (\lambda\rho+2D)/\rho_1$
$\lambda_1\rho_1\leq D(1+\log_2\frac{D}{\rho_1})$, or

\noindent (2) a non-nilpotent matrix of rank at most $\rho$ is contained in $L^{\Lambda}$ with $\Lambda\leqslant\lambda+2D/\rho$.
\end{lemma}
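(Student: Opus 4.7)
The plan is to exploit the rank factorization $H = UV$ with $U \in \CC^{D\times\rho}$ and $V \in \CC^{\rho\times D}$ of rank $\rho$, together with the identity $H^2 = 0$, and to apply Lemma~\ref{lem4} to control rank. Two identities drive the argument: first, $H(WH)^p = U(VWU)^pV$ for any $W$ and $p\ge 0$, so $\operatorname{rank}(H(WH)^p) = \operatorname{rank}((VWU)^p)$; second, rewriting $H(WH)^p = (HW)^pH$ and using $H^2=0$ yields $(H(WH)^p)^2 = (HW)^p H^2 (WH)^p = 0$, so every such matrix is square-zero. Applying Lemma~\ref{lem4} with $P = V$, $Q = U$, $S = L$ and letting $k^*$ be the minimal integer with $VL^{k^*}U \ne 0$ (well-defined since $L^j$ eventually fills $\Endo(V)$), one obtains $\operatorname{rank}(VWU) \le D/k^*$ for every $W \in L^{k^*}$.

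Next I branch on how $k^*$ compares to $2D/\rho$. In Case A, $k^* \ge 2D/\rho$: pick $W \in L^{k^*}$ with $VWU \ne 0$ and set $\rho_1 = \operatorname{rank}(HWH) = \operatorname{rank}(VWU) \le D/k^* \le \rho/2$ and $\lambda_1 = 2\lambda + k^*$. Then $\lambda_1\rho_1 \le 2\lambda\rho_1 + k^*\rho_1 \le \lambda\rho + D$, and combining $\lambda\rho \le D(1+\log_2(D/\rho))$ with $\log_2(D/\rho_1) \ge 1 + \log_2(D/\rho)$ (from $\rho_1 \le \rho/2$) gives $\lambda_1\rho_1 \le D(1+\log_2(D/\rho_1))$, proving conclusion~(1).

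In Case B, $k^* < 2D/\rho$: by the cyclic-trace identity $\operatorname{tr}((HW)^j) = \operatorname{tr}((VWU)^j)$, the matrix $HW$ is non-nilpotent iff $VWU$ is. If some $W \in L^{k^*}$ yields non-nilpotent $VWU$, then $HW \in L^{\lambda+k^*}$ is non-nilpotent of rank $\le \rho$, giving conclusion~(2) with $\Lambda \le \lambda + 2D/\rho$. Otherwise $VL^{k^*}U$ is a nonzero nil subspace; fix $W$ with $VWU \ne 0$ and let $s \ge 2$ be the nilpotency index of $VWU$. A Jordan-form computation shows that $(VWU)^{s-1}$ has rank equal to the number of size-$s$ Jordan blocks of $VWU$, and since each such block contributes $s$ to the total dimension $\le \rho$, this count is at most $\rho/s$. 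Hence $H(WH)^{s-1} = U(VWU)^{s-1}V \in L^{s\lambda+(s-1)k^*}$ is a nonzero square-zero matrix of rank $\rho_1 \le \rho/s \le \rho/2$, and the required inequality with $\lambda_1 = s\lambda+(s-1)k^*$ reduces, after using $\lambda\rho \le D(1+\log_2(D/\rho))$ and $k^*\rho < 2D$, to the elementary bound $\log_2 s \ge 2(s-1)/s$, which holds for all integers $s \ge 2$ (with equality at $s=2$).

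The main obstacle is this last subcase: Lemma~\ref{lem4} alone only gives $\operatorname{rank}(VWU) \le D/k^*$, which in the regime $k^* < 2D/\rho$ exceeds $\rho/2$ and is thus too weak to yield conclusion~(1) directly. The nilpotency of $VWU$ has to be converted into a rank reduction separately, and the route I propose does this by passing to the maximal power $(VWU)^{s-1}$, trading a factor of $s$ in the word length against a factor of $s$ in the rank, the two costs neatly balanced by $\log_2 s \ge 2(s-1)/s$.
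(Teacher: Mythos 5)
Your proof is correct and takes essentially the same route as the paper: the rank factorization $H = UV$ is equivalent to the paper's choice of block basis with $P = V$, $Q = U$, the quantity $VWU$ is exactly the paper's $A' = PAQ$, and your two cases (with the nil/non-nil split inside the $k^* < 2D/\rho$ branch) mirror the paper's Cases 1 and 2 with the same word-length and rank bookkeeping, down to the elementary inequality $\log_2 s \geq 2(s-1)/s$. One small imprecision: Lemma~\ref{lem4} gives $\operatorname{rank}(VA_1\cdots A_{k^*}U)\leq D/k^*$ only for products of $k^*$ elements of $L$, not for arbitrary $W\in L^{k^*}$, but this is harmless since you may always choose $W$ to be such a word.
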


\begin{proof}
We choose a basis such that
$$H=\left(\begin{array}{c|c|c}
O&O&I_\rho\\\hline
O&O&O\\\hline
O&O&O
\end{array}\right)$$
and define $P=(O|O|I_\rho)$ and $Q=(I_\rho|O|O)^\top$. Let $k$ be the smallest integer for which there exist $A_1,\ldots,A_k\in L$ satisfying $PA_1\ldots A_kQ\neq0$ (such an integer exists because $L$ generates the whole matrix ring as a $\CC$-algebra). {Let $A=A_1\ldots A_k$ and $A'=PAQ$ be the bottom left block of $A$.}

\textit{Case 1.} Assume $k\leqslant 2D/\rho$. If $A'$ is not nilpotent, then $HA$ is a non-nilpotent matrix of rank at most $\rho$, which makes the condition (2) valid. Otherwise, $A'$ is a nilpotent of index $\alpha>1$, and then $H_1=(HA)^{\alpha-1} H$ is a square-zero matrix of non-zero rank $\rho_1\leqslant\rho/\alpha$. %It remains to n
Note that $H_1$ is spanned by words of length at most: 
%$$(\alpha-1)(\lambda+k)+\lambda =\alpha\lambda+(\alpha-1)k\leqslant \lambda\rho/\rho_1+2D(\frac{\rho}{\rho_1}-1)/\rho\leqslant (\lambda\rho+2D)/\rho_1.$$
$$(\alpha-1)(\lambda+k)+\lambda =\alpha\lambda+(\alpha-1)k\leqslant \lambda\rho/\rho_1+2D(\frac{\rho}{\rho_1}-1)/\rho\leqslant\frac{D}{\rho_1}\left(1+\log_2\frac{D}{\rho}+2(1-\frac{\rho_1}{\rho})\right).$$
%In particular, condition (1) holds.
To prove that condition (1) holds it remains to show that:
$$\log_2\frac{D}{\rho}+2(1-\frac{\rho_1}{\rho})\leqslant \log_2\frac{D}{\rho_1},$$
which is equivalent to:
$$2+\log_2\frac{\rho_1}{\rho}\leqslant 2\frac{\rho_1}{\rho}.$$
One can easily verify this inequality, as $0\leqslant \frac{\rho_1}{\rho}\leqslant\frac{1}{2}$.

\textit{Case 2.} Assume $k\geqslant 2D/\rho$. {Note} that $HAH$ has $A'$ at the upper right block and zeros everywhere else. Lemma~\ref{lem4} shows that the rank of $HAH$ is $\rho_1\leqslant D/k\leqslant 0.5\rho$. {Further,} $HAH$ is spanned by words of length at most $$2\lambda+k\leqslant \lambda\rho/\rho_1+D/\rho_1\leqslant \frac{D}{\rho_1}\left(2+\log_2\frac{D}{\rho}\right)\leqslant \frac{D}{\rho_1}\left(1+\log_2\frac{D}{\rho_1}\right) .$$ {Hence, condition (1) holds.}
\end{proof}

\begin{lemma}\label{lem:nonnil}
There exists $R>0$ and
$$\Lambda\leqslant\frac{D}{R}\left(3+\log_2\frac{D}{R}\right)$$
such that $L^\Lambda$ contains a non-nilpotent matrix of rank $R$.
\end{lemma}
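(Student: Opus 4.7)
My plan is to iterate Lemma~\ref{lem5}: whenever its hypothesis is met, either outcome~(2) fires and produces exactly the object we want, or outcome~(1) fires and delivers a new square-zero matrix of at most half the rank still satisfying the same hypothesis. Since the rank halves at each step, this process terminates within $O(\log D)$ rounds, so the whole proof reduces to supplying a single initial square-zero matrix satisfying the hypothesis of Lemma~\ref{lem5}.

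First I would dispose of the easy case. If $L$ already contains a non-nilpotent matrix of some rank $R$, I take $\Lambda=1$; the required inequality $1\leq\frac{D}{R}(3+\log_2\frac{D}{R})$ is trivial and the lemma is proved. Otherwise every element of $L$ is nilpotent (note $L\neq 0$ since $L$ generates $\Endo(V)$). Picking any nonzero $N\in L$, let $k_1\geq 2$ be the size of its largest Jordan block and $\rho_0$ the number of such maximal blocks. Then $H_0:=N^{k_1-1}\in L^{k_1-1}$ is square-zero of rank $\rho_0$, and the blocks of size $k_1$ contribute $\rho_0 k_1\leq D$ to the dimension; consequently $\lambda_0\rho_0:=(k_1-1)\rho_0<D\leq D(1+\log_2(D/\rho_0))$, which is exactly the hypothesis of Lemma~\ref{lem5}.

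Next I would run the recursion on triples $(H_i,\lambda_i,\rho_i)$. If outcome~(1) fires, the lemma returns a new triple with $\rho_{i+1}\leq\rho_i/2$ satisfying the same invariant, and I recurse; once $\rho_i=1$ the interval $[1,\rho_i/2]$ contains no positive integer, so outcome~(1) is impossible and Lemma~\ref{lem5} must fire outcome~(2). When it does, $L^\Lambda$ contains a non-nilpotent matrix of some rank $R\leq\rho_i$ with
\[
\Lambda\leq\lambda_i+\frac{2D}{\rho_i}\leq\frac{D}{\rho_i}\!\left(3+\log_2\frac{D}{\rho_i}\right)\leq\frac{D}{R}\!\left(3+\log_2\frac{D}{R}\right),
\]
the first inequality combining outcome~(2) with the invariant $\lambda_i\rho_i\leq D(1+\log_2(D/\rho_i))$, and the second using that $x\mapsto\frac{D}{x}(3+\log_2\frac{D}{x})$ is decreasing on $(0,D]$.

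The only non-automatic ingredients are the construction of the initial square-zero matrix in the all-nilpotent case and the monotonicity of $f(x)$ used to pass from the returned rank $\rho_i$ to the actual rank $R\leq\rho_i$ of the non-nilpotent matrix; I do not expect either to be a serious obstacle, and everything else is pure bookkeeping propagated by the two outcomes of Lemma~\ref{lem5}.
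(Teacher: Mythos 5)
Your proof is correct and matches the paper's approach: dispose of the trivial case where $L$ already contains a non-nilpotent matrix, otherwise extract a square-zero matrix $A^{\lambda_0}$ from a nilpotent $A\in L$ satisfying the invariant of Lemma~\ref{lem5}, iterate that lemma until case~(2) fires, and chain the resulting bounds using monotonicity of $x\mapsto\frac{D}{x}(3+\log_2\frac{D}{x})$. The paper phrases the initial data in terms of the nilpotency index rather than the largest Jordan block, but these are equivalent, and your write-up is if anything a bit more explicit about verifying the initial invariant, the termination at rank one, and the monotonicity step.
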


\begin{proof}
If $L$ contains a non-nilpotent matrix, then we are done. Otherwise, there is a matrix $A\in L$ of nilpotency index $\lambda_0+1>1$. {The} matrix $A^{\lambda_0}$ is square-zero, belongs to $L^{\lambda_0}$, and has rank $\rho_0\in[1,D/(\lambda_0+1)]$. Now we repeatedly apply Lemma~\ref{lem5} until we end up under the condition (2) of it; we obtain a sequence $(\lambda_0,\rho_0),\ldots,(\lambda_\tau,\rho_\tau)$. %so that we assume that in step $\tau-1$ condition (1) of Lemma~\ref{lem5} holds and in step $\tau$ condition (2) holds.
%such that for all $t\in\{0,\ldots,\tau-1\}$ it holds that $\rho_{t}\in\left[1,0.5\rho_{t}\right]$ and
%$$\lambda_{t+1}\leqslant \frac{\lambda_t\rho_t}{\rho_{t+1}}+\frac{2D}{\rho_{t+1}},$$
%$$\lambda_{t}\leqslant \frac{D(1+\log_2\frac{D}{\rho_{t}})}{\rho_{t}},$$
%and every $L^{\lambda_t}$ contains a square-zero matrix of rank $\rho_t$. 
We write $R=\rho_\tau$ and assume that we fall into case (2) of Lemma~\ref{lem5} after applying it to $(\lambda_{\tau-1},\rho_{\tau-1})$. %By induction w
As $\lambda_{\tau-1}\rho_{\tau-1}\leqslant D(1+\log_2\frac{D}{\rho_{\tau-1}})$ we get:
$$\lambda_\tau\leqslant%(\lambda_0\rho_0+2D\tau)/\rho_\tau$$, and since $\rho_t\geqslant2\rho_{t+1}$ for all $t$, we get $\tau\leqslant\log_2(\rho_0/R)$; we also have {$\lambda_0\rho_0\leqslant(\lambda_0+1)\rho_0\leqslant D$}. Therefore, we are able to find a non-nilpotent matrix of rank at most $R$ in $L^\Lambda$ with $$\Lambda\leqslant\lambda_\tau+\frac{2D}{\rho_\tau}\leqslant \frac{2D}{R}\left(\log_2\frac{D}{(\lambda_0+1) R}+1.5\right)\leqslant{\frac{2D}{R}\log_2\frac{D}{R}+1}.$$ 
\lambda_{\tau-1}+\frac{2D}{\rho_{\tau-1}}\leqslant \frac{D(3+\log_2\frac{D}{\rho_{\tau-1}})}{\rho_{\tau-1}}\leqslant \frac{D}{R}(3+\log_2\frac{D}{R}).$$

\end{proof}

Our aim is to bound from above the smallest $j$ for which $\dim L^j=D^2$. From now on we set:
$$\II :=\min\{j:\dim L^j=D^2\}.$$
The following Lemma is based on the techniques presented in \cite[Section 3]{QuantumWielandt}. We include a complete proof for the sake of completeness.
\begin{lemma}\label{lem:ifnonnil}
Suppose we have a non-nilpotent matrix $B\in L^\Lambda$ of rank $R$. Then $\II\leq \Lambda(R+1)D$.
\end{lemma}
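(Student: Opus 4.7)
The plan is to use the non-nilpotent $B$ first to produce a Fitting projector inside the subalgebra $\mathbb{C}[B]$, and then to run a length-$D$ flag argument in $V$ that exhibits $\Endo(V)$ inside $L^{\Lambda(R+1)D}$.

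For the spectral reduction, I would observe that $\operatorname{rank}(B)=R$ forces the minimal polynomial of $B$ to have degree at most $R+1$: indeed, $B|_{\operatorname{im}(B)}$ is an operator on an $R$-dimensional invariant subspace and so satisfies a monic polynomial $p$ of degree $\leq R$, whence $t\,p(t)$ (of degree $\leq R+1$) annihilates $B$. Combined with non-nilpotence, the Fitting decomposition $V=V_0\oplus V_1$ (with $B|_{V_0}$ nilpotent, $B|_{V_1}$ invertible, and $1\leq r:=\dim V_1\leq R$) yields a spectral projector $E\in\mathbb{C}[B]$ onto $V_1$. Since $E$ must vanish on $V_0$, it has no constant term as a polynomial in $B$, and can be written as
$$E=\sum_{i=1}^{R+1} c_i B^i \in \sum_{i=1}^{R+1} L^{i\Lambda}.$$
The key property is that $E$ restricts to the identity on $V_1$, which will let us ``insert'' summands $c_i B^i$ into words without altering their action on $V_1$.

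The main step is a chain-of-subspaces argument of length $D$ in $V$. Fix a non-zero $v\in V_1$ and, for each $k\geq 0$, let $U_k\subseteq V$ be the span of $\{Av : A\text{ a specifically constructed element of }L^{k\Lambda(R+1)}\}$. Because $L$ generates $\Endo(V)$, any strictly growing chain of $L$-accessible subspaces containing $v$ fills $V$ in at most $D$ steps. The construction of $U_k$ inside a single power of $L$ is the main technical obstacle: the natural chain produced from arbitrary $L$-words lives in $L^{\leq j}$ rather than in $L^j$, and one must use the idempotent $E$ as a length-normalizer. Concretely, inserting the $i$-th summand $c_iB^i$ of $E$ into a word extends its length by $i\Lambda\leq(R+1)\Lambda$ without changing its action on $V_1$, so a word of length $k'\Lambda(R+1)-i\Lambda$ can be padded to length $k'\Lambda(R+1)$. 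Arranging these paddings consistently across the $D$ chain extensions is what carries the factor $R+1$ into the final bound.

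Finally, to upgrade $U_D=V$ into full equality $L^{\Lambda(R+1)D}=\Endo(V)$, I would repeat the chain argument with $v$ ranging over a basis of $V_1$ and symmetrically pad on the \emph{left} using the same decomposition of $E$, so that the constructed operators span all of $\Hom(V,V_1)\cdot(B|_{V_1})^{-N}\cdots$, and hence all of $\Endo(V)$. The expected stumbling block is precisely the coordinated length-matching in the preceding paragraph: the coefficients $c_i$ of $E$ and the choices of insertion positions must be controlled simultaneously across all $D$ iterations, and this combinatorial bookkeeping is where the $R+1$ arises.
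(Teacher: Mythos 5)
There is a genuine gap, and it is precisely where you yourself flag the difficulty: the length-normalization. Your proposed padding — inserting a \emph{single} summand $c_iB^i$ of the idempotent $E=\sum_{i=1}^{R+1}c_iB^i$ into a word — does \emph{not} leave the action on $V_1$ unchanged. Only the full sum $E$ restricts to the identity on $V_1$; an individual term $c_iB^i$ restricts to $c_i(B|_{V_1})^i$, which is generally not the identity. Inserting all of $E$ does preserve the action on $V_1$, but it lands you in $L^{m+\Lambda}+\dots+L^{m+(R+1)\Lambda}$, a sum of powers, not in any single $L^j$ — which is exactly the obstacle you started with. So the normalizer you propose does not actually normalize, and the argument does not close.

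The paper sidesteps the idempotent entirely with a cheaper device: since $B$ is non-nilpotent, rescale so that $Bv=v$ for some eigenvector $v$, and reindex so $B\in L^1$. Then $B^m v=v$ for all $m$, so any $W\in L^i$ with $i\leq D$ can be replaced by $WB^{D-i}\in L^D$ without changing its value at $v$. That single observation collapses $L^{\leq D}v$ to $L^Dv$. For the other half, the paper does not ``symmetrically pad on the left'' as you suggest; instead it runs a \emph{separate} strictly-increasing chain on the spaces $\tilde M_j:=PL^j$ (with $P$ the spectral projector onto the generalized eigenspaces of nonzero eigenvalues), using that left multiplication by $B$ commutes with $P$ and is injective on $\operatorname{im}(P)$. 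This chain saturates $\operatorname{Hom}(V,V')$ by step $sD\leq RD$, and a further multiplication by $B^{R-s}$ (which kills the nilpotent Jordan part) plus $B$-padding places a rank-one map $v_1\mapsto v$ inside $L^{RD}$. The factor $R+1$ then arises simply as $RD+D$ from composing the two chains, not from any coordinated choice of insertion positions. You would need both of these ingredients — the eigenvalue-one normalization and the second chain on $PL^j$ — to complete the proof along these lines.
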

\begin{proof}
{\bf Step 0:}
After rescaling $B$, we may assume there exists an eigenvector $v$ with $Bv=v$. By passing from the sequence $L^j$ to the subsequence $L^{j\cdot \Lambda}$ we may assume that $B\in L^1$ and we want to prove that $\dim L^{(R+1)D}=D^2$.

{\bf Step 1:}
Consider the sequence of vector subspaces of $\CC^D$ defined by:
$$M_j:=(L^1+\dots+L^j)v.$$
Clearly, $M_1\subset M_2\subset \dots$. Further, if $M_{j}=M_{j+1}$, then $M_{j}=M_{j+k}$ for any $k$. Indeed, the former equality is equivalent to $L_{j+1}v\subset (L^1+\dots+L^j)v$. In such a case, by induction on $k$ we have:
$$L^{j+1+k}v=\bigoplus_{A\in L^k}AL^{j+1}v\subset \bigoplus_{A\in L^k,1\leq i\leq j}AL^iv\subset M_{j+k}\subset M_j.$$
%and the claim follows inductively.
Hence, $\dim M_j<\dim M_{j+1}$ unless $M_j=M_{j+1}=\dots$. As the sequence $L^i$ is spanning, we must have $M_j=\CC^D$ for large $j$.
We conclude that $M_k=\CC^D$ for $k\geq D$, by dimension count. It follows that for any $w\in \CC^D$ %and any $k\geq D$ 
there exist such elements $W_i\in L^i$ that:
$$w=\sum_{i=1}^D W_i v.$$
However, then we also have $w=\sum_{i=1}^D W_iB^{D-i}v$ and $W_iB^{D-i}\in L^D$. We have proved that %for any $k\geq D$ we have 
$L^Dv=\CC^D$. 

{\bf Step 2:}
We fix a basis, starting from $v$, in which $B$ is in Jordan normal form. We assume that first $s$ eigenvalues of $B$ are nonzero. Clearly $s\leq R$. Let $P$ be the projector onto the vector space $V'\subset V$ spanned by first $s$ basis vectors. We consider the following spaces of matrices $\tilde M_j:=PL^j$. We claim that $\dim \tilde M_j<\dim\tilde M_{j+1}$, unless $\dim \tilde M_j=sD$, i.e.~it is maximal possible. Indeed, let $W_1,\dots,W_q$ be a basis of $\tilde M_j$. These are linearly independent operators from $V$ to $V'$. We set $\tilde W_i:=BW_i$. As $PB=BP$ we have $\tilde W_i\in M_{j+1}$. Further, as the restriction of $B$ to $V'$ is invertible, we see that $\tilde W_i$ are linearly independent. We see that $\dim \tilde M_j\leq\dim\tilde M_{j+1}$. If equality holds, then $\tilde W_i$ span $\tilde M_{j+1}$. %We claim that i
In this situation we %must 
have %$ \tilde M_{j+k+1}\subset B\tilde M_{j+k}$. Indeed, 
$\tilde M_{j+k+1}=\tilde M_{j+1}L^k=B\tilde M_j L^k=B\tilde M_{j+k}$. In particular, $\dim M_{j+k}$ is constant for $k\geq 0$. As the sequence $L_i$ is spanning this can happen only if $\dim \tilde M_{j}=sD$. By dimension count, it follows that $\dim \tilde M_{sD}=sD$. 
%Consider $PA_{i_1}\cdots A_{i_{j+k+1}}\in \tilde M_{j+k+1}\in \tilde M_{j+k+1}$. We have $\ni PA_{i_1}\cdots A_{i_{j+1}}$

We have $B^{R-s}P=B^{R-s}$, as the nilpotent part of the Jordan decomposition gets annihilated. Hence, $B^{R-s}\tilde M_{sD}=B^{R-s}L_{sD}$. As $B^{R-s}$ restricted to $V'$ is an isomorphism we see that $B^{R-s}L_{sD}\subset L_{sD+R-s}$ contains all linear maps from $V$ to $V'$. In particular, for %any $j\geq rD$ and 
any $w\in V$ there exists such $M\in L_{RD}$ of rank one that $Mw=v$. 

{\bf Step 3:}
We prove that $\dim L^{(R+1)D}=D^2$, by showing that all rank one matrices belong to $L^{(R+1)D}$. Fix arbitrary two vectors $v_1,v_2\in V$. We construct a rank one matrix in $L^{(R+1)D}$ that sends $v_1$ to $v_2$. By Step 1 there exists such a matrix $M_1\in L^D$ that $M_1v=v_2$. By Step 2 there exists such a matrix $M_2\in L_{RD}$ of rank one that $M_2v_1=v$. Clearly $M_1M_2\in L_{(R+1)D}$ is of rank one and $M_1M_2v_1=v_2$, which finishes the proof of the Lemma.
%We claim that 
\end{proof}

\begin{theorem}
We have $\II\leq 2D^2\left(6+\log_2 D\right)$, i.e.~$\dim L^k=D^2$ for some $k$ if and only if $\dim L^{\lfloor 2D^2\left(6+\log_2 D\right)\rfloor}=D^2$.
\end{theorem}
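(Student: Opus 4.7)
The plan is to combine Lemmas \ref{lem:nonnil} and \ref{lem:ifnonnil} directly. First, I would invoke Lemma \ref{lem:nonnil} to produce a non-nilpotent matrix $B\in L^\Lambda$ of some rank $R>0$ satisfying
\[
\Lambda \leq \frac{D}{R}\left(3+\log_2\frac{D}{R}\right).
\]
The hypothesis $\dim L^j=D^2$ for some $j$ guarantees that $L$ generates $\Endo(V)$ as a $\CC$-algebra, which is exactly what is implicitly used in the proof of Lemma \ref{lem5} (to ensure the integer $k$ defined there exists).

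Next, I would feed this $B$ into Lemma \ref{lem:ifnonnil} to obtain $\II \leq \Lambda(R+1)D$. Substituting the bound above yields
\[
\II \leq D^{2}\cdot\frac{R+1}{R}\left(3+\log_2\frac{D}{R}\right).
\]
Since $R\geq 1$, one has $(R+1)/R\leq 2$, and certainly $\log_2(D/R)\leq\log_2 D$. These two trivial estimates give
\[
\II \leq 2D^{2}(3+\log_2 D)\leq 2D^{2}(6+\log_2 D),
\]
which is the main quantitative claim.

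For the equivalent reformulation, I would observe that once $L^{j}=\Endo(V)$ for some $j$, the same holds for every $k\geq j$. Indeed, set $V_L:=\sum_{X\in L}\mathrm{image}(X)$; if $V_L$ were a proper subspace of $V$, then every word $X_{1}\cdots X_{j}$ would have image contained in $\mathrm{image}(X_{1})\subseteq V_L$, contradicting $L^{j}=\Endo(V)$. Hence $V_L=V$, so $L^{j+1}\supseteq\sum_{X\in L}X\cdot\Endo(V)$ contains every matrix whose image lies in $V_L=V$, i.e.\ $L^{j+1}=\Endo(V)$; induction finishes the stabilization argument, and taking $k=\lfloor 2D^{2}(6+\log_2 D)\rfloor\geq\II$ gives the claimed equivalence.

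Since the rank-reduction engine encapsulated in Lemma \ref{lem5} and the reachability scheme of Lemma \ref{lem:ifnonnil} do all the nontrivial work, there is no genuine obstacle at this final step---the combination is purely arithmetic, and the only minor subtlety is verifying the dimension-stabilization used in the ``if and only if'' reformulation.
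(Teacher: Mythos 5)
Your proof is correct and follows essentially the same route as the paper: invoke Lemma \ref{lem:nonnil} to obtain a non-nilpotent matrix of rank $R$ in $L^\Lambda$ with $\Lambda\leq\frac{D}{R}(3+\log_2\frac{D}{R})$, feed it into Lemma \ref{lem:ifnonnil}, and bound the resulting expression using $R\geq 1$ (the paper says ``maximized at $R=1$'', you use $(R+1)/R\leq 2$ and $\log_2(D/R)\leq\log_2 D$, which gives the same $2D^2(3+\log_2 D)\leq 2D^2(6+\log_2 D)$). The one genuine addition is your explicit stabilization argument showing $L^j=\Endo(V)$ implies $L^{j+1}=\Endo(V)$, which the paper leaves implicit in the ``if and only if'' phrasing; that argument (via $\sum_{X\in L}\operatorname{image}(X)=V$) is sound.
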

\begin{proof}
By Lemma \ref{lem:nonnil} we know there exists a rank $R$ non-nilpotent matrix $A\in L^\Lambda$ with 
$$\Lambda\leqslant\frac{D}{R}\left(3+\log_2\frac{D}{R}\right).$$
Applying this to Lemma \ref{lem:ifnonnil} we obtain:
$$\II\leqslant \left(\frac{D}{R}\left(3+\log_2\frac{D}{R}\right)\right)(R+1)D.$$
As $1\leq R\leq D$ the above value is maximized for $R=1$ which gives the result. 
\end{proof}

%\begin{table}
%\centering
%\begin{tabular}{l|r}
%Item & Quantity \\\hline
%Widgets & 42 \\
%Gadgets & 13
%\end{tabular}
%\caption{\label{tab:widgets}An example table.}
%\end{table}
\begin{remark}
One could consider a 'dual' question:

Suppose $L^j=0$ for some $j$, what are the bounds on $j$?

This is much easier, as in fact $L^j=0$ if and only if $L^D=0$. Indeed, if $L^{j_0}=0$ for some $j_0$ we know that $\sum_{j=1}^{j_0} L^{j} $ is an algebra of nilpotent matrices. In particular, it is a Lie algebra consisting of nilpotent matrices. Thus by Engel's theorem, all matrices in the algebra can be simultaneously brought into upper-diagonal form. Hence, $L^D=0$. 

Clearly, $D$ is optimal, as demonstrated by an example when $L^1$ consists of all (nilpotent) strictly upper-diagonal matrices.
\end{remark}
\begin{remark}
As one can see, the most problematic case is when $L^1$ contains only nilpotent matrices. Of course, still it is possible that $L^j=\Endo(V)$ for some $V$ - examples can be found e.g.~in \cite{MATHES1991215}.
\end{remark}
\begin{remark}
We point out that even if $L^j=\Endo(V)$ for some $j$ it is not true that the sequence $\dim L^i$ has to be weakly monotonic. An example can be found in \cite{vsidak1964povctu}.
\end{remark}

%\subsection{How to add Citations and a References List}

%You can upload a \verb|.bib| file containing your BibTeX entries, created with JabRef; or import your \href{https://www.overleaf.com/blog/184}{Mendeley}, CiteULike or Zotero library as a \verb|.bib| file. You can then cite entries from it, like this: \cite{greenwade93}. Just remember to specify a bibliography style, as well as the filename of the \verb|.bib|.

%You can find a \href{https://www.overleaf.com/help/97-how-to-include-a-bibliography-using-bibtex}{video tutorial here} to learn more about BibTeX.

%We hope you find Overleaf useful, and please let us know if you have any feedback using the help menu above --- or use the contact form at \url{https://www.overleaf.com/contact}!

\bibliographystyle{alpha}
\bibliography{PEPSbib}

\end{document}